% AMS LaTeX
\documentclass[12pt]{amsart}
\textwidth=16.2cm
\oddsidemargin=0cm
\evensidemargin=0cm
\newtheorem{thm}{Theorem}
\newtheorem{prop}[thm]{Proposition}
\newtheorem{cor}[thm]{Corollary}
\newtheorem{lem}[thm]{Lemma}
\theoremstyle{definition}
\newtheorem{exa}{Example}

\newtheorem{que}{Question}
\newcommand{\fF}{{\mathfrak F}}
\newcommand{\IN}{{\mathbb N}}
\newcommand{\IR}{{\mathbb R}}
\DeclareMathOperator{\id}{id}
\DeclareMathOperator{\Lip}{Lip}
\DeclareMathOperator{\spann}{span}
\title{Free Banach spaces and the approximation properties}
\author{Gilles Godefroy}
\address{Institut de Math\'ematiques de Jussieu,
4 Place Jussieu 75005 Paris}
\email{godefroy@math.jussieu.fr}
\author{Narutaka Ozawa}
\address{RIMS, Kyoto University, \mbox{606-8502} Kyoto}
\email{narutaka@kurims.kyoto-u.ac.jp}
\subjclass{46B20; 46B28, 46B50}

\keywords{Lipschitz free space, approximation property}
\date{\today}
\begin{document}
\begin{abstract}
We characterize the metric spaces whose free spaces have the bounded approximation
property through a Lipschitz analogue of the local reflexivity principle.
We show that there exist compact metric spaces whose free spaces fail the
approximation property.
\end{abstract}
\maketitle
\section{Introduction.}
Let $M$ be a pointed metric space, that is, a metric space equipped with a distinguished point
denoted $0$. We denote by $\Lip_0(M)$ the Banach space of all real-valued Lipschitz functions
defined on $M$ which vanish at $0$, equipped with the natural Lipschitz norm
\[
\Vert f\Vert_L=\sup \big\{\frac{\vert f(x)-f(y)\vert}{\Vert x-y\Vert}~;~(x,y)\in M^2,~x\not=y \big\}.
\]
For all $x\in M$, the Dirac measure $\delta(x)$ defines a continuous linear form on $\Lip_0(M)$.
Equicontinuity shows that the closed unit ball of $\Lip_0(M)$ is compact for pointwise convergence
on $M$, and thus the closed linear span of $\{ \delta(x)~;~x\in M\}$ in $\Lip_0(M)^*$ is an isometric
predual of $\Lip_0(M)$. This predual is called the Arens-Eells space of $M$ in \cite{weaver},
and (when $M$ is a Banach space) the Lipschitz-free space over $M$ in \cite{gk}, denoted by $\fF(M)$.
We will use this notation, and simply call $\fF(M)$ the free space over $M$. When $M$ is separable,
the Banach space $\fF(M)$ is separable as well, since the set $\{ \delta(x)~;~x\in M\}$ equipped
with the distance induced by $\Lip_0(M)^*$ is isometric to $M$.

The free spaces over separable metric spaces $M$ constitute a fairly natural family of
separable Banach spaces, which are moreover very useful in non-linear geometry of Banach
spaces (see \cite{kalton2}). However, they are far from being well-understood at this point
and some basic questions remain unanswered. We recall that a Banach space $X$ has the approximation property (AP) if the identity $\id_X$ of $X$ is in the closure of the finite rank operators on $X$ for the topology of uniform convergence on compact sets. The $\lambda$-bounded approximation property ($\lambda$-BAP) means that there are approximating finite rank operators with norm less than $\lambda$, and the (1-BAP) is called the metric approximation property (MAP). This note is devoted to the following problem:
for which metric spaces $M$ does the space $\fF(M)$ have (AP),
or (BAP), or (MAP)?
For motivating this query, recall that real-valued Lipschitz functions defined on subsets of
metric spaces extend with the same Lipschitz constant through the usual inf-convolution formula.
However, approximation properties for free spaces are related with the existence of
{\sl linear} extension operators for Lipschitz functions defined on subsets
(see \cite{bm}, and Proposition 6 below).

It is already known that some free spaces fail AP: indeed one of the main results of \cite{gk}
asserts that if $X$ is an arbitrary Banach space and $\lambda\geq 1$, then $X$ has $\lambda$-BAP
if and only if $\fF(X)$ has $\lambda$-BAP. Since moreover any separable Banach space $X$ is
isometric to a $1$-complemented subspace of $\fF(X)$ (\cite{gk}, Theorem~3.1), it follows that
$\fF(X)$ fails AP when $X$ does.

This note provides further examples of metric spaces whose free spaces fail AP.
We show in particular that some spaces $\fF(K)$, with $K$ compact metric spaces, fail AP
although MAP holds for ``small" Cantor sets.

Section~\ref{sec2} gives a characterization of the $\lambda$-BAP for $\fF(M)$ through
weak*-approximation of Lipschitz functions from $M$ into bidual spaces, somewhat similar
to the local reflexivity principle. In section~\ref{sec3}, a method used in \cite{gk}
and localized in \cite{dl} is shown to provide the existence of compact convex sets $K$
with $\fF(K)$ failing AP. Several open questions conclude the note.

\section{Lipschitz local reflexivity}\label{sec2}

For metric spaces $M$ and $X$, we denote by
$\Lip^\lambda(M,X)$ the set of $\lambda$-Lipschitz maps from $M$ into $X$.
We assume that $M$ is separable and $X$ is complete.
Fix a dense sequence $(x_n)_n$ in $M$ and define
a metric $d$ on $\Lip^\lambda(M,X)$ by
\[
d(f,g)=\sum_{n=1}^\infty \min\{d(f(x_n),g(x_n)),\,2^{-n}\}.
\]
Then, $d$ is a complete metric on $\Lip^\lambda(M,X)$ whose
topology coincides with the pointwise convergence topology.

Let $Z$ be a Banach subspace of $Y$ and
denote the quotient map by $Q\colon Y\to Y/Z$.
We say $Z$ is an \emph{$\mathrm{M}$-ideal with an approximate unit},
or an $\mathrm{M}$-iwau in short, if
there are nets of operators $\phi_i\colon Y\to Z$
and $\psi_i\colon Y\to Y$ such that
$\phi_i(z)\to z$ for every $z\in Z$,
$Q\circ\psi_i=Q$ for all $i$,
$\phi_i+\psi_i\to\id_Y$ pointwise,
and $\|\phi_i(x)+\psi_i(y)\|\le\max\{\|x\|,\,\|y\|\}$
for all $x,y\in Y$ and $i$.
We note that $\psi_i\to0$ on $Z$ and
$\|Q(y)\|=\lim\|\psi_i(y)\|$.

\begin{exa}\label{exa:A}
Let $X$ be a separable Banach space and $X_n$ be an
increasing sequence of finite-dimensional subspaces
whose union is dense. Then, we define
\[
Y=\{ (x_n)_n \in (\prod X_n)_{\infty} ~;~
\mbox{the sequence $(x_n)_n$ is convergent in $X$}\}.
\]
Then, $Y$ is a Banach space with the MAP with the metric
surjection $Q\colon Y\to X$ given by the limit.
The subspace $\ker Q$ is an $\mathrm{M}$-iwau, with
$\phi_k((x_n)_n)=(x_1,\ldots,x_k,0,0,\ldots)$.
\end{exa}

\begin{exa}
Every closed two-sided ideal
$I$ in a C$^*$-algebra is an $\mathrm{M}$-iwau.
\end{exa}

\begin{lem}[cf.\ \cite{arveson}, Theorem~6]
Let $Z\subset Y$ be an $\mathrm{M}$-iwau and $M$ be a separable
metric space. Then, for every $\lambda\geq1$ the set
\[
\{ Q\circ f ~;~ f\in\Lip^\lambda(M,Y)\}
 \subset \Lip^\lambda(M,Y/Z)
\]
is closed under the pointwise convergence topology.
\end{lem}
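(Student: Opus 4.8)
The plan is to show that the pointwise limit $g$ of a sequence $g_k=Q\circ f_k$, with $f_k\in\Lip^\lambda(M,Y)$, is itself of the form $Q\circ f$ for some $f\in\Lip^\lambda(M,Y)$. Since the metric $d$ is complete and induces the pointwise topology, and since $h\mapsto Q\circ h$ is continuous for that topology, it suffices to produce a $d$-Cauchy sequence $F_m\in\Lip^\lambda(M,Y)$ with $Q\circ F_m\to g$ pointwise: its $d$-limit $f$ then lies in $\Lip^\lambda(M,Y)$ and satisfies $Q\circ f=\lim Q\circ F_m=g$.

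The main device is a splicing formula built from the $\mathrm{M}$-iwau nets. Having chosen $F_m$ (with $Q\circ F_m=Q\circ f_{k_m}$), I would pass to a later index $k_{m+1}$ and to a net index $i$ and set $F_{m+1}=\psi_i\circ f_{k_{m+1}}+\phi_i\circ F_m$. Three properties must be checked. First, $F_{m+1}\in\Lip^\lambda(M,Y)$: for any $x,y$ the increment equals $\psi_i(a)+\phi_i(b)$ with $a=f_{k_{m+1}}(x)-f_{k_{m+1}}(y)$ and $b=F_m(x)-F_m(y)$, so the defining inequality $\|\phi_i(b)+\psi_i(a)\|\le\max\{\|a\|,\|b\|\}$ yields exactly the bound $\lambda\,d(x,y)$ — this is global and preserves the constant $\lambda$ with no error term. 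Second, $Q$ is preserved exactly: since $\phi_i(Y)\subset Z=\ker Q$ and $Q\circ\psi_i=Q$, we get $Q\circ F_{m+1}=Q\circ f_{k_{m+1}}$, which converges to $g$ as $k_{m+1}\to\infty$. Third, $F_{m+1}$ is $d$-close to $F_m$: writing $F_{m+1}-F_m=\psi_i(f_{k_{m+1}}-F_m)+(\phi_i+\psi_i-\id)(F_m)$, the second term tends to $0$ pointwise, while the first is controlled because $\|\psi_i(w)\|\to\|Q(w)\|$ and $Q(f_{k_{m+1}}-F_m)=Q f_{k_{m+1}}-Q f_{k_m}$ is small at a prescribed finite set of the dense points.

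Here separability enters decisively: at stage $m$ I only need these estimates at the finitely many points $x_1,\dots,x_{N_m}$, and on a finite set the pointwise convergences $\phi_i+\psi_i\to\id$ and $\|\psi_i(\cdot)\|\to\|Q(\cdot)\|$ are uniform, so a single large $i$ suffices; the tail of the series defining $d$ is absorbed by the $2^{-n}$ truncation. Choosing $k_m\uparrow\infty$ with $Q f_{k_m}\to g$ fast enough on $\{x_1,\dots,x_m\}$ and $N_m\to\infty$, I can arrange $d(F_{m+1},F_m)\le N_m\varepsilon_m+2^{-N_m}$ summable, so $(F_m)$ is $d$-Cauchy and the construction closes.

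I expect the one genuinely delicate point to be the choice of the combination $F_{m+1}=\psi_i\circ f_{k_{m+1}}+\phi_i\circ F_m$ itself: the naive correction $f_{k_{m+1}}-\phi_i\circ(f_{k_{m+1}}-F_m)$ only yields a Lipschitz bound up to an error term $(\id-\phi_i-\psi_i)$ evaluated on all increments $f_{k_{m+1}}(x)-f_{k_{m+1}}(y)$, which cannot be made uniformly small since $\phi_i+\psi_i\to\id$ holds merely pointwise. The virtue of the symmetric combination is that the $\mathrm{M}$-ideal inequality controls the Lipschitz constant globally and exactly, shifting all the remaining (only pointwise) convergence into the harmless task of keeping $F_{m+1}$ close to $F_m$ at finitely many points. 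This mirrors Arveson's use of a quasicentral approximate unit in the C$^*$-setting.
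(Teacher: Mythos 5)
Your proposal is correct and follows essentially the same route as the paper: the same splicing $\phi_i\circ(\text{previous lift})+\psi_i\circ(\text{new function})$, the $\mathrm{M}$-iwau max-inequality to preserve the Lipschitz constant $\lambda$ exactly, exact preservation of $Q$, and the estimate $\|\psi_i(w)\|\to\|Q(w)\|$ combined with the $2^{-n}$ truncation of $d$ to make the recursively built lifts $d$-Cauchy. Your closing remarks on why the naive correction fails only spell out the motivation behind the paper's choice of combination.
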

\begin{proof}
Let $(f_n)_n$ be a sequence in $\Lip^\lambda(M,Y)$
such that $Q\circ f_n$ converge to $F \in \Lip^\lambda(M,Y/Z)$.
To prove that $F$ lifts, we may assume that
$d(Q\circ f_n,Q\circ f_{n+1})<2^{-n}$.
We will recursively construct $g_n$ such that
$Q\circ g_n=Q\circ f_n$ and
$d(g_n,g_{n+1})<2^{-n}$.
Then, the sequence $(g_n)_n$ converges and its limit
is a lift of $F$.
For $g_{n+1}$, we define
\[
g_{n+1,i}=\phi_i\circ g_n+\psi_i\circ f_{n+1}.
\]
Then, $g_{n+1,i}\in\Lip^\lambda(M,Y)$,
$Q\circ g_{n+1,i}=Q\circ f_{n+1}$ and
\[
\lim_i d(g_n,g_{n+1,i})
= \lim_i d(\psi_i\circ g_n,\psi_i\circ f_{n+1})
= d(Q\circ g_n,Q\circ f_{n+1}) < 2^{-n}.
\]
Thus, there is $i$ such that $g_{n+1}:=g_{n+1,i}$ works.
\end{proof}

\begin{thm}\label{thm:lr}
Let $M$ be a separable metric space and $\lambda\geq1$.
Then, the free space $\fF(M)$ has the $\lambda$-BAP
if and only if $M$ has the following property:
For any Banach space $Y$ and any
$f\in\Lip^1(M,Y^{**})$, there is a net in
$\Lip^\lambda(M,Y)$ which converges to $f$
in the pointwise-weak$^*$ topology.
\end{thm}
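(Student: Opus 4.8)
The plan is to read the statement through the universal property of the free space and then treat the two implications separately, the forward one by the principle of local reflexivity and the converse by Example~\ref{exa:A} together with the preceding Lemma. The basic dictionary is this: the universal property identifies $\Lip_0(M,E)$ isometrically with the space of bounded linear operators from $\fF(M)$ into $E$, a Lipschitz map $f$ vanishing at $0$ corresponding to the operator $\hat f$ determined by $\hat f(\delta(x))=f(x)$, with $\Vert\hat f\Vert=\Vert f\Vert_L$. Thus a $\lambda$-Lipschitz map corresponds to an operator of norm at most $\lambda$, and, since $\{\delta(x)\}$ has dense linear span while the operators in play are uniformly bounded, pointwise convergence on $M$ of a bounded family of maps corresponds to strong-operator convergence of the induced operators. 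A nonzero value at the base point is absorbed by translation, writing $f=f(0)+(f-f(0))$ and recovering the constant $f(0)\in Y^{**}$ weak$^*$ from $Y$ by Goldstine's theorem; so throughout I may assume the maps vanish at $0$.

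For the direct implication, suppose $\fF(M)$ has $\lambda$-BAP and fix $Y$ and $f\in\Lip^1(M,Y^{**})$, with induced norm-one operator $\hat f\colon\fF(M)\to Y^{**}$. Choose finite-rank operators $T_i$ with $\Vert T_i\Vert\le\lambda$ and $T_i\to\id_{\fF(M)}$ strongly. Each $\hat f\circ T_i$ is finite rank, with range in a finite-dimensional subspace $E_i\subset Y^{**}$. Applying the principle of local reflexivity to $E_i$, to a finite-dimensional $G\subset Y^*$, and to $\varepsilon>0$ yields $u\colon E_i\to Y$ with $\Vert u\Vert\le1+\varepsilon$ that agrees with the identity in the duality with $G$; then $(1+\varepsilon)^{-1}u\circ\hat f\circ T_i$ induces a map in $\Lip^\lambda(M,Y)$. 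Indexing the net by $(i,G,\varepsilon)$ (together with the Goldstine net for the base point) and using the duality property on $G$ together with $T_i\delta(x)\to\delta(x)$, one checks that these maps converge to $f$ in the pointwise-weak$^*$ topology.

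For the converse, apply Example~\ref{exa:A} to $X=\fF(M)$ with an increasing sequence $X_n$ of finite-dimensional subspaces of dense union, obtaining a Banach space $Y$, a metric surjection $Q\colon Y\to\fF(M)$ given by the limit, and an $\mathrm{M}$-iwau $Z=\ker Q$. It suffices to show that $\delta$ lies in the liftable set $\{Q\circ f~;~f\in\Lip^\lambda(M,Y)\}$: writing such a lift as $f=(f_n)_n$ gives $\lambda$-Lipschitz $f_n\colon M\to X_n$ with $f_n(x)\to\delta(x)$, so after normalizing $f_n(0)=0$ the induced finite-rank operators satisfy $\Vert\hat f_n\Vert\le\lambda$ and $\hat f_n\to\id_{\fF(M)}$ strongly, which is the $\lambda$-BAP. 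The heart of the matter is to build, from the $\mathrm{M}$-iwau nets $\phi_i,\psi_i$, a norm-one linear lifting $L\colon\fF(M)\to Y^{**}$ of the canonical embedding through $Q^{**}$: for $\xi\in\fF(M)$ pick $y\in Y$ with $Qy=\xi$ and let
\[
L\xi=\text{weak$^*$ cluster point of }(\psi_i(y))_i\text{ in }Y^{**}.
\]
Since $Q\psi_i=Q$ one gets $Q^{**}L\xi=\xi$; since $\psi_i\to0$ on $Z$ the value is independent of $y$; and $\Vert\psi_i(y)\Vert\le\Vert y\Vert$ (from the $\mathrm{M}$-iwau inequality) forces $\Vert L\xi\Vert\le\Vert\xi\Vert$, the genuine linearity of $L$ coming from the underlying $\mathrm{M}$-ideal decomposition $Y^{**}=\fF(M)^{**}\oplus_\infty Z^{**}$. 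Then $h:=L\circ\delta\in\Lip^1(M,Y^{**})$ satisfies $Q^{**}\circ h=\delta$; feeding $h$ into the hypothesis gives a net in $\Lip^\lambda(M,Y)$ converging to $h$ pointwise-weak$^*$, and applying the weak$^*$-weak$^*$ continuous $Q^{**}$ places $\delta$ in the pointwise-weak closure of the liftable set.

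The step I expect to be the main obstacle is bridging this weak approximation with the norm (pointwise) closedness that the Lemma provides, since after applying $Q^{**}$ one only controls $\delta$ weakly. I plan to resolve this by convexity: the set $\{Q\circ f~;~f\in\Lip^\lambda(M,Y)\}$ is convex (as $\Lip^\lambda(M,Y)$ is convex and $f\mapsto Q\circ f$ is linear), so, working over a dense sequence of $M$ and invoking Mazur's theorem coordinatewise, its closure in the pointwise-weak topology coincides with its closure in the pointwise-norm topology. The Lemma, applicable because $Z$ is an $\mathrm{M}$-iwau, says that this norm closure is the set itself; hence $\delta=Q\circ f$ for some $f\in\Lip^\lambda(M,Y)$, which by the first paragraph of this argument yields the $\lambda$-BAP and completes the converse.
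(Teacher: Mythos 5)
Your proof is correct and follows essentially the same route as the paper's: the forward direction combines the $\lambda$-BAP approximants with the principle of local reflexivity, and the converse applies Example~\ref{exa:A}, uses the $\mathrm{M}$-ideal identification of $Y^{**}$ to embed $M$ into $Y^{**}$, upgrades pointwise-weak to pointwise-norm convergence by convexity (Mazur), and invokes the Lemma to lift $\delta$. The only differences are presentational: you spell out the local reflexivity step and the embedding $L$ explicitly, and you read off the $\lambda$-BAP from the finite-rank coordinates $f_n$ rather than from the MAP of $Y$.
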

\begin{proof}
Suppose $\fF(M)$ has the $\lambda$-BAP, and
$f\in \Lip^1(M,Y^{**})$ is given.
Then, $f$ extends to a linear contraction $\hat{f}\colon \fF(M)\to Y^{**}$.
Since $\fF(M)$ has the $\lambda$-BAP, the local reflexivity principle
yields a net of operators $T_i\colon \fF(M)\to Y$
with norm $\le \lambda$ which weak* converges to $\hat{f}$ pointwise.
Restricting it to $M$, we obtain a desired net.

Conversely, suppose $M$ satisfies the property stated in Theorem~\ref{thm:lr}.
We apply the construction described in Example~\ref{exa:A} to
$\fF(M)$ and obtain $Q\colon Y\to\fF(M)$.
Since $Z=\ker Q$ is an $\mathrm{M}$-ideal, one has a canonical
identification $Y^{**}=Z^{**}\oplus_\infty\fF(M)^{**}$.
In particular, $M\hookrightarrow Y^{**}$ naturally.
By assumption, there is a net $f_i\in\Lip^\lambda(M,Y)$
which approximates the above inclusion.
Since $Q\circ f_i\in\Lip^{\lambda}(M,\fF(M))$ converge
to $\id_M$ in the point-weak topology,
by taking convex combinations if necessary,
we may assume that they converge in the point-norm topology.
Thus by Lemma 1, $\id_M\colon M\hookrightarrow\fF(M)$ lifts to
a function $f\in\Lip^\lambda(M,Y)$. The function $f$ extends to
$\hat{f}\colon \fF(M)\to Y$, which is a lift of $\id_{\fF(M)}$.
Since $Y$ has the MAP, $\fF(M)$ has the $\lambda$-BAP.
\end{proof}

\begin{cor}\label{cor:lr}
A separable Banach space $X$ has the ${\lambda}$-BAP
if and only if for any Banach space $Y$ and any
$f\in\Lip^1(X,Y^{**})$, there is a net in
$\Lip^\lambda(X,Y)$ which converges to $f$
in the pointwise-weak$^*$ topology.
\end{cor}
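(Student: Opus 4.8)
The plan is to deduce this corollary from Theorem~\ref{thm:lr} by specializing the metric space $M$ to the Banach space $X$ itself, regarded as a pointed metric space with distinguished point the origin $0\in X$. A separable Banach space is automatically separable and complete as a metric space, so the hypotheses of Theorem~\ref{thm:lr} are satisfied, and the right-hand condition of the corollary is literally the Lipschitz local reflexivity property appearing in that theorem with $M=X$. Hence Theorem~\ref{thm:lr} gives at once that $\fF(X)$ has the $\lambda$-BAP if and only if $X$ enjoys the stated approximation property.

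It then remains only to transfer the $\lambda$-BAP between $X$ and its free space $\fF(X)$. For this I would invoke the result of \cite{gk} recalled in the introduction: for any Banach space $X$ and any $\lambda\geq1$, the space $X$ has the $\lambda$-BAP if and only if $\fF(X)$ has the $\lambda$-BAP. Chaining this equivalence with the one obtained above yields that $X$ has the $\lambda$-BAP if and only if $X$ satisfies the Lipschitz local reflexivity condition, which is exactly the assertion of the corollary.

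Since both ingredients are already in place — Theorem~\ref{thm:lr} as proved above and the \cite{gk} transfer theorem — there is no genuine obstacle here; the only point deserving a word of care is the verification that $X$, viewed as a metric space, falls within the scope of Theorem~\ref{thm:lr}, which is immediate. I would note in passing that the forward implication can also be read off directly from the first half of the proof of Theorem~\ref{thm:lr}: given $f\in\Lip^1(X,Y^{**})$, one extends it to a linear contraction $\hat f\colon\fF(X)\to Y^{**}$, applies the $\lambda$-BAP of $X$ together with the classical local reflexivity principle to $\hat f$, and restricts the resulting net of operators back to $X$. The substance of the corollary thus lies entirely in Theorem~\ref{thm:lr} and the equivalence of \cite{gk}, and the corollary is their formal combination.
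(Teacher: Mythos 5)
Your proposal is correct and coincides with the paper's own argument: the corollary is obtained by taking $M=X$ in Theorem~\ref{thm:lr} and combining it with the equivalence from \cite{gk} (Theorem~5.3) that $X$ has the $\lambda$-BAP if and only if $\fF(X)$ does. Nothing further is needed.
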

This follows immediately from Theorem~\ref{thm:lr} since $X$ has ${\lambda}$-BAP
if and only if $\fF(X)$ has this same property (\cite{gk}, Theorem~5.3).
Note that we can replace ``Lipschitz maps" by ``linear operators" in
Corollary~\ref{cor:lr} and reach the same conclusion. In this case, our argument
boils down to a method due to Ando (\cite{ando}, see \cite{hww}, section~II.2).

\section{Some free spaces failing AP}\label{sec3}

We first prove:
\begin{thm}\label{thm:compl}
Let $X$ be a separable Banach space, and let $C$ be a closed convex set containing $0$
such that $\overline{\spann}[C]=X$. Then $X$ is isometric to a $1$-complemented subspace of
$\fF(C)$.
\end{thm}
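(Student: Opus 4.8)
The plan is to localize the argument of \cite{gk}, Theorem~3.1, running it inside $C$ and using convexity to keep everything within $\fF(C)$. The inclusion $\iota\colon C\hookrightarrow X$ is $1$-Lipschitz with $\iota(0)=0$, so it extends to a linear barycenter map $\beta\colon\fF(C)\to X$ with $\|\beta\|\le1$ and $\beta\circ\delta=\iota$; since $\overline{\spann}[C]=X$, this map has dense range. It therefore suffices to produce a linear map $T\colon X\to\fF(C)$ with $\|T\|\le1$ and $\beta\circ T=\id_X$: then $P=T\circ\beta$ satisfies $P^2=T(\beta\circ T)\beta=P$ and $\|P\|\le1$, with range $T(X)$; moreover $\beta\circ T=\id_X$ forces $\|Tx\|\ge\|x\|$, so $T$ is an isometry and $T(X)$ is the desired $1$-complemented isometric copy of $X$. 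Thus the whole theorem reduces to constructing a linear isometric \emph{section} of $\beta$.

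First I would record the raw material. For $x\in C$ and $t\in(0,1]$, convexity gives $tx\in C$, so $\tfrac1t\delta(tx)\in\fF(C)$; since $\delta$ is isometric on $C$ one has $\|\tfrac1t\delta(tx)\|=\|x\|$, the map $x\mapsto\tfrac1t\delta(tx)$ is $1$-Lipschitz on $C$ for each fixed $t$, and $\beta(\tfrac1t\delta(tx))=x$. Consequently any pointwise limit or average of such elements automatically has norm $\le1$ and lifts $\iota$ through $\beta$; the entire difficulty is to arrange that the limiting object be \emph{linear} in $x$ while still lying in $\fF(C)$.

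The linearization itself is the mechanism of \cite{gk}: one averages increments of $g\in\Lip_0$ over base points against an invariant mean $m$ on the amenable group $(X,+)$. The functional $x\mapsto m_a[g(a+x)-g(a)]$ is additive, because translation invariance lets one telescope $g(a+x+y)-g(a)=[g((a+y)+x)-g(a+y)]+[g(a+y)-g(a)]$, and it is bounded by $\|g\|_L\,\|x\|$, hence $\IR$-linear; evaluated on a linear functional $\xi$ it returns $\xi(x)$, so the element it defines lifts $x$ through $\beta$. Dualizing gives a linear isometric section, and homogeneity over $\IR$ together with continuity propagates linearity from $\spann[C]$ to all of $X$.

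The hard part will be that this averaging is intrinsically global: carried out as above it uses base points $a+x$ lying outside $C$ and produces only an element of the bidual $\fF(C)^{**}=\Lip_0(C)^*$, i.e.\ it naturally yields a section into $\fF(X)$ rather than into $\fF(C)$. I expect the real work, and the point where the localization of \cite{dl} is needed, to be confining the construction to $\fF(C)$ itself, and this is exactly where convexity is indispensable: the dilation semigroup $t\mapsto tx$ and the midpoints $\tfrac12(x+y)$ keep the relevant increments inside $C$, so that the F{\o}lner averages realizing $m$ can be chosen among genuine convex combinations of Dirac masses $\delta(c)$ with $c\in C$, and a convexity/weak-compactness argument then places the limit in $\fF(C)$. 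By contrast a purely radial average of $\tfrac1t\delta(tx)$ is useless: it is only positively homogeneous and fails additivity already for $g=\|\cdot\|$, since $\langle\tfrac1t\delta(tx),g\rangle=\|x\|$. Reconciling the radial confinement, which secures membership in $\fF(C)$, with the transverse base-point averaging, which secures additivity, is the crux of the proof.
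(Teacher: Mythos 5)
Your reduction is correct and is the same as the paper's: it suffices to produce a linear map $R\colon X\to\fF(C)$ with $\Vert R\Vert\le1$ and $\beta R=\id_X$, after which $R\beta$ is a norm-one projection onto the isometric copy $R(X)$. But the construction of $R$ is exactly where your proposal stops, and you say so yourself: you ``expect the real work \dots\ to be confining the construction to $\fF(C)$ itself'' and call the reconciliation of confinement with additivity ``the crux of the proof.'' That crux is not supplied, and the mechanism you propose does not obviously survive it. The invariant-mean functional $g\mapsto m_a[g(a+x)-g(a)]$ lives a priori in $\Lip_0(X)^*=\fF(X)^{**}$, not in $\fF(X)$, let alone in $\fF(C)$; the corresponding averages $\frac1{|F|}\sum_{a\in F}(\delta(a+x)-\delta(a))$ are norm-bounded in $\fF(X)$, but $\fF(X)$ is not reflexive, so their weak$^*$ cluster points need not lie in $\fF(X)$, and translation by a generic base point $a\in X$ immediately leaves $C$. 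No argument is given for the assertion that the averages realizing $m$ ``can be chosen among genuine convex combinations of Dirac masses $\delta(c)$ with $c\in C$,'' nor that such a restricted choice is still compatible with the translation invariance from which you derive additivity.

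The paper resolves this by a different and more concrete device that kills both problems at once. It does not extract additivity from an invariant mean: it \emph{imposes} linearity by defining $R$ only on a linearly independent sequence $(x_i)$ with dense span, chosen inside $C$ with $\Vert x_i\Vert=2^{-i}$ so small that all the points $\sum_j t_jx_j$ and $x_n+\sum_{j\ne n}t_jx_j$, $t\in[0,1]^{\IN}$, remain in $C$. Then $R(x_n)$ is a Bochner integral over the compact probability space $H_n=[0,1]^{\IN\setminus\{n\}}$ of the increments $\delta(x_n+u)-\delta(u)$ with both endpoints in $C$; this integral converges in norm in $\fF(C)$, so there is no passage through the bidual and no weak$^*$ limit to bring back down. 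The only thing left to prove is $\Vert R\Vert\le1$ on $\spann[\{x_i\}]$, and that is done by Fubini: for G\^ateaux-differentiable $f$ one has $R(x)(f)=\int_H\langle\nabla f(\sum_j t_jx_j),x\rangle\,d\lambda(t)$, whence $\vert R(x)(f)\vert\le\Vert x\Vert\,\Vert f\Vert_L$, and such $f$ are uniformly dense in the unit ball of $\Lip_0(X)$. In short, where you try to buy additivity from group invariance and then pay for membership in $\fF(C)$, the paper buys membership in $\fF(C)$ from the geometry of the Hilbert-cube average and pays for the norm estimate with the gradient representation. As written, your proposal identifies the right obstruction but does not overcome it.
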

\begin{proof}
The proof relies on a modification from \cite{dl} (see Lemma~2.1 in that paper)
of the proof of (\cite{gk}, Theorem~3.1).
We first recall that since every real-valued Lipschitz map on $C$ extends a Lipschitz
map on $X$ with the same Lipschitz constant, the canonical injection from $C$ into $X$ extends
to an isometric injection from $\fF(C)$ into $\fF(X)$ (see \cite{gk}, Lemma~2.3). Thus we simply
consider $\fF(C)$ as a subspace of $\fF(X)$.

Let $(x_i)_{i\geq1}$ be a linearly independent sequence of vectors in $C/2$
such that $\overline{\spann}[\{ x_i~;~i\geq 1\}]=X$ and $\Vert x_i\Vert=2^{-i}$ for all $i$.
We let $E=\spann[\{x_i~;~i\geq 1\}]$. We denote by $H=[0,1]^{\IN}$ the Hilbert cube,
by $t=(t_j)_j$ a generic element of $H$, and by $\lambda$ the product of the Lebesgue measures
on each factor of $H$. Of course, $\lambda$ is a probability
measure on $H$. Moreover, for any $n\in \IN$, we denote $H_n=[0,1]^{{\IN}\backslash\{n\}}$
and $\lambda_n$ the similar probability measure on $H_n$.

We denote $R\colon E\rightarrow \fF(X)$
the unique linear map which satisfies for all $n\geq 1$ and all $f\in\Lip_0(X)$
\[
R(x_n)(f)=\int_{H_n}\big[f(x_n+\sum_{ j\not=n} t_j x_j)-f(\sum_{j\not=n} t_j x_j)\big]d\lambda_{n}(t).
\]
It is clear that the map $R$ actually takes its values in the subspace $\fF(C)$ of $\fF(X)$. If $f$ is
G\^ateaux-differentiable, then Fubini's theorem shows that
\[
R(x)(f)=\int_{H}\langle\{\nabla f\}(\sum_{j}t_j x_j), x\rangle\, d\lambda(t)
\]
and thus $\vert R(x)(f)\vert\leq\Vert x\Vert\,\Vert f\Vert_L$.
Since the subset of the unit ball of $\Lip_0(X)$ consisting of functions which are
G\^ateaux-differentiable is uniformly dense in this unit ball (see \cite{bl}, Corollary~6.43),
it follows that $\Vert R\Vert\leq 1$. Since $E$ is dense in $X$, the map $R$ extends to
a linear operator of norm 1 from $X$ to $\fF(C)$, which we still denote by $R$.

If $\beta$ denotes the canonical quotient map from $\fF(X)$ onto $X$ (see \cite{gk}, Lemma~2.4),
we have $\beta R=Id_X$ and thus $R(X)$ is a subspace of $\fF(C)$ isometric to $X$
and $1$-complemented by the projection $R\beta$.
\end{proof}

The main corollary of this result is the following
\begin{cor}\label{cor:ap}
There exists a compact metric space $K$ such that $\fF(K)$ fails AP.
\end{cor}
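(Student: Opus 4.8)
The plan is to combine Theorem~\ref{thm:compl} with the known fact (cited in the introduction from \cite{gk}, Theorem~5.3) that a separable Banach space $X$ has $\lambda$-BAP if and only if $\fF(X)$ does, together with the existence of a separable Banach space $X$ that fails AP. The strategy is to manufacture a compact convex set $C$ inside a suitable separable space $X$ failing AP, with $\overline{\spann}[C]=X$ and $0\in C$, so that Theorem~\ref{thm:compl} exhibits $X$ as a $1$-complemented subspace of $\fF(C)$. Since AP passes to complemented subspaces, $\fF(C)$ would then fail AP as well, and $C$ would be our compact metric space $K$.

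First I would fix a separable Banach space $X$ that fails AP; such spaces exist by Enflo's construction, and one may even take a subspace of $c_0$ or of a space with an unconditional basis. The key point to arrange is \emph{compactness} of $C$. The set $C$ produced implicitly by Theorem~\ref{thm:compl} is the closed convex hull of a sequence $(x_i)$ with $\|x_i\|=2^{-i}$; so the natural candidate is
\[
C=\overline{\mathrm{conv}}\,\bigl(\{0\}\cup\{\pm x_i~;~i\geq1\}\bigr),
\]
where $(x_i)$ is chosen linearly independent with $\overline{\spann}[\{x_i\}]=X$ and $\|x_i\|=2^{-i}$. Because the $x_i$ tend to $0$ in norm, the set $\{0\}\cup\{\pm x_i\}$ is norm-compact, and the closed convex hull of a norm-compact subset of a Banach space is again norm-compact. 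Hence $C$ is a compact convex set containing $0$ whose linear span is dense in $X$, exactly the hypotheses of Theorem~\ref{thm:compl}.

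With $C$ so chosen, Theorem~\ref{thm:compl} gives that $X$ is isometric to a $1$-complemented subspace of $\fF(C)$, via the projection $R\beta$. Since AP is inherited by complemented (indeed by any norm-one complemented) subspaces, the failure of AP for $X$ forces $\fF(C)$ to fail AP. Setting $K=C$ then yields the corollary.

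**The main obstacle** I anticipate is not any single deep step but the verification that the construction is internally consistent: one must choose the sequence $(x_i)$ so that it is simultaneously linearly independent, has dense span, satisfies the norm constraint $\|x_i\|=2^{-i}$, and lies in $C/2$ as required by the proof of Theorem~\ref{thm:compl}. Arranging $x_i\in C/2$ while $C$ is itself defined as the convex hull of the $x_i$ is the delicate coherence issue; it is resolved by taking $C$ to be the closed \emph{absolutely} convex hull of $\{x_i\}$, so that each $x_i=\tfrac12(2x_i)$ with $2x_i$ still in the absolutely convex hull, placing $x_i\in C/2$. Once this is set up, compactness and the application of Theorem~\ref{thm:compl} are routine, and the conclusion follows immediately.
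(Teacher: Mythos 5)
Your proposal is correct and follows the paper's proof exactly: pick a separable $X$ failing AP, build a compact convex $C\ni 0$ with $\overline{\spann}[C]=X$ (the paper dismisses this as ``classical and easily seen''; your closed convex hull of $\{0\}\cup\{\pm x_i\}$ with $\Vert x_i\Vert=2^{-i}$ is the standard construction), and invoke Theorem~\ref{thm:compl} plus the fact that AP passes to complemented subspaces. Your worry about arranging $x_i\in C/2$ is unnecessary here, since Theorem~\ref{thm:compl} is applied as a black box whose proof chooses its own sequence inside the given $C$.
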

\begin{proof}
Let $X$ be a separable Banach space failing the AP. It is classical and easily seen that
there is a compact convex set $K$ containing $0$ such that $\overline{\spann}[K]=X$.
By Theorem~\ref{thm:compl}, the space $\fF(K)$ contains a complemented subspace failing AP
and thus $\fF(K)$ itself fails AP.
\end{proof}

\begin{exa}
This result emphasizes the need to decide for which metric spaces $M$ - and
in particular for which compact metric spaces - the corresponding free space has the AP.
It is well-known that MAP holds when $K$ is an interval of the real line since then $\fF(K)$
is isometric to $L^1$, and more generally if $M$ is any subset of the real line since then $\fF(M)$ is 1-complemented in $L^1$. If $C$ is a closed convex subset of the Hilbert space $\ell_2$,
then $\fF(C)$ has MAP. Indeed $C$ is a $1$-Lipschitz retract of $\ell_2$ and thus $\fF(C)$
is $1$-complemented in $\fF(\ell_2)$ which has MAP by (\cite{gk}, Theorem~5.3).
A metric space $M$ is isometric
to a subset of a metric tree $T$ if and only if $\fF(M)$ embeds isometrically into
$L^1$ (\cite{godard}). It follows from \cite{mat} that for any such $M$ the space $\fF(M)$ has BAP.
Finally, it is shown in \cite{lp} among other things that for any $n\geq 1$ the space $\fF({\bf R}^n)$ has a basis, and that $\fF(M)$ has (BAP) for any doubling metric space $M$.
\end{exa}

We now observe that ``small" Cantor sets yield to free spaces with MAP.

\begin{prop}\label{prop:rho}
Let $K$ be a compact metric space such that
there exist a sequence $(\epsilon_n)_n$ tending to $0$, a real number $\rho<1/2$
and finite $\epsilon_n$-separated subsets $N_n$ of $K$ which are $\rho\epsilon_n$-dense
in $K$, then $\fF(K)$ has MAP.
\end{prop}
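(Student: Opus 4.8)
The plan is to establish MAP by exhibiting, for each $n$, a finite rank operator $T_n$ on $\fF(K)$ with $\|T_n\|\to1$ and $T_n\to\id_{\fF(K)}$ in the strong operator topology. Rescaling each $T_n$ by $\max(1,\|T_n\|)^{-1}$ then produces operators of norm $\le1$, still converging strongly to $\id_{\fF(K)}$; since a uniformly bounded net that converges strongly converges uniformly on compact sets, this is exactly MAP. Each $T_n$ will be the canonical linearization $\widehat{\Phi_n}\colon\fF(K)\to\fF(N_n)\subset\fF(K)$ of a Lipschitz map $\Phi_n\colon K\to\fF(N_n)$ with $\Phi_n(0)=0$. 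As $N_n$ is finite, $\fF(N_n)$ is finite dimensional, so $T_n$ has finite rank, and $\|T_n\|=\mathrm{Lip}(\Phi_n)$. Because $\{\delta(x)~;~x\in K\}$ has dense linear span, the strong convergence $T_n\to\id$ will follow from the pointwise convergence $\Phi_n(x)=T_n\delta(x)\to\delta(x)$ together with the uniform bound on $\|T_n\|$.

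First I would build $\Phi_n$ as a barycentric map. Writing $w_y(x)=(r_n-d(x,y))_+$ for $y\in N_n$, with a radius $r_n$ chosen so that $\epsilon_n\ll r_n\to0$ (possible since $\epsilon_n\to0$), I set $\phi_y(x)=w_y(x)/\sum_{z\in N_n}w_z(x)$ and
\[
\Phi_n(x)=\sum_{y\in N_n}\phi_y(x)\,\delta(y)-\Phi_n^{(0)},\qquad
\Phi_n^{(0)}=\sum_{y\in N_n}\phi_y(0)\,\delta(y).
\]
The denominator never vanishes, since every $x$ lies within $\rho\epsilon_n<r_n$ of some $y\in N_n$, so $\Phi_n$ is well defined; subtracting the constant $\Phi_n^{(0)}\in\fF(N_n)$ forces $\Phi_n(0)=0$ without altering the Lipschitz constant. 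Pointwise convergence is then immediate: as $\sum_y\phi_y(x)=1$ and $\phi_y(x)\ne0$ only when $d(x,y)<r_n$, one has $\|\sum_y\phi_y(x)\delta(y)-\delta(x)\|_{\fF}\le\max_{\phi_y(x)>0}d(x,y)\le r_n$, and likewise $\|\Phi_n^{(0)}\|\le r_n$, whence $\Phi_n(x)\to\delta(x)$ for every $x$.

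The whole difficulty is thus concentrated in the norm estimate $\mathrm{Lip}(\Phi_n)\to1$. By Kantorovich--Rubinstein duality, $\|\Phi_n(x)-\Phi_n(x')\|_{\fF}=\sup\{\,|\Psi_h(x)-\Psi_h(x')|~;~\|h\|_L\le1\,\}$, where $\Psi_h(x)=\sum_y\phi_y(x)h(y)$ is the weighted average of a $1$-Lipschitz $h$ on $N_n$, so it suffices to show each $\Psi_h$ is $(1+o(1))$-Lipschitz uniformly in $h$. The heuristic is that with the mesh $\epsilon_n$ far smaller than the averaging radius $r_n$, the weighted average tracks $x$ up to an error of order $r_n$ while discretization costs only $O(\epsilon_n/r_n)$. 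This is precisely where the separation of $N_n$ and, crucially, the hypothesis $\rho<1/2$ enter: that hypothesis forces $K$ to decompose at every scale into the disjoint clopen clusters $B(y,\rho\epsilon_n)\cap K$ whose mutual distances are at least $(1-2\rho)\epsilon_n$, which is what should let the differing weights be transported only locally.

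I expect this uniform Lipschitz estimate to be the main obstacle. The robust parts of the argument---coverage of $K$, finite rank, and pointwise convergence---persist under much weaker hypotheses; what genuinely requires $\rho<1/2$ is pushing the operator norms all the way to $1$, rather than to the cruder bound $1/(1-2\rho)$ that the plain nearest-point assignment $x\mapsto\delta(\pi_n(x))$ would yield. I would attack it through a careful coupling/transport argument for the measures $\sum_y\phi_y(x)\delta(y)$ across the well-separated clusters, tuning $r_n$ (for instance $r_n=\sqrt{\epsilon_n}$) so as to balance the discretization error against the averaging error. Once $\mathrm{Lip}(\Phi_n)\le1+o(1)$ is secured, the conclusion follows as in the first paragraph, and $\fF(K)$ has MAP.
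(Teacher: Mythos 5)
Your reduction of MAP to the single estimate $\Lip(\Phi_n)\le 1+o(1)$ is honest about where the difficulty lies, but that estimate is precisely the part you do not prove, and it is very unlikely to be provable by the route you sketch. For a partition-of-unity barycenter $\Psi_h(x)=\sum_y\phi_y(x)h(y)$ the standard computation gives $|\Psi_h(x)-\Psi_h(x')|\le C\,N(x)\,d(x,x')$, where $N(x)$ is the number of points of $N_n$ within distance $r_n$ of $x$: one must sum $|\phi_y(x)-\phi_y(x')|$ over all $y$ in the support, each such term costs $O(d(x,x')/r_n)$, and each is multiplied by an increment $|h(y)-h(y_0)|$ of size up to $O(r_n)$, so the gain of $1/r_n$ is exactly cancelled and the multiplicity $N(x)$ survives. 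The hypotheses of the proposition impose no doubling condition, so $N(x)$ is completely uncontrolled (think of $N_n$ an $\epsilon_n$-net in an infinite-dimensional compact convex set, or an ultrametric Cantor set with huge branching); even under doubling, $N(x)\sim(r_n/\epsilon_n)^{d}\to\infty$ once you take $r_n\gg\epsilon_n$ as you propose. Your appeal to the clusters $B(y,\rho\epsilon_n)$ being $(1-2\rho)\epsilon_n$-separated does not help here: that separation controls a \emph{nearest-point} retraction (with constant $(1-2\rho)^{-1}$, not $1+o(1)$), not the transport cost of reshuffling mass among many clusters simultaneously. So the ``main obstacle'' you defer is a genuine gap, not a technicality.

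The paper avoids this entirely by splitting the problem. First it gets only BAP, with the crude constant $\lambda=(1-2\rho)^{-1}$: the nearest-point maps $P_n\colon K\to N_n$ are $\lambda$-Lipschitz retractions, $E_n(f)=f\circ P_n$ are uniformly bounded linear extension operators $\Lip(N_n)\to\Lip(K)$, and a criterion of Borel-Mathurin converts this into BAP for $\fF(K)$. Then, instead of improving the constant by hand, it observes that the hypothesis $\rho<1/2$ makes the characteristic functions of the balls $B(y,\rho\epsilon_n)$, $y\in N_n$, into little Lipschitz functions that uniformly separate points; by Weaver's theorem $\fF(K)$ is then isometrically a separable dual space, and Grothendieck's theorem upgrades BAP to MAP for separable duals. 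If you want to salvage your approach, you should either import that duality argument (in which case your barycentric maps are unnecessary --- the nearest-point retractions already suffice), or restrict to doubling spaces and accept a constant bounded away from $1$, which proves BAP but not MAP.
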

\begin{proof}
It follows from (\cite{bm}, Theorem~4) that if $M$ is a separable metric space and
$(M_n)_n$ is an increasing sequence of finite subsets of $M$ whose union is dense in $M$,
then $\fF(M)$ has BAP if and only if there is a uniformly bounded sequence of linear
operators $E_n\colon \Lip(M_n)\rightarrow \Lip(M)$ such that if $R_n$ denotes the restriction
operator to $M_n$, then for every $f\in\Lip(M)$ the sequence $f_n=E_nR_n(f)$ converges
pointwise to $f$. Our assumptions imply the existence of $\lambda$-Lipschitz retractions
$P_n$ from $K$ onto $N_n$, with $\lambda=(1 - 2\rho)^{-1}$, and then
$E_n(f)=f\circ P_n$ shows that $\fF(K)$ has BAP. To conclude the proof,
we observe that in the notation of (\cite{weaver}, Definition~3.2.1), the little Lipschitz space
$\mathrm{lip}_0(K)$ uniformly separates the points in $K$ (use the characteristic functions of the balls of radius $\rho\epsilon_n$ centered at points in $N_n$), and thus by (\cite{weaver}, Theorem~3.3.3)
the space $\fF(K)$ is isometric to the dual space of $\mathrm{lip}_0(K)$. Now Grothendieck's theorem
shows that $\fF(K)$ has MAP since it is a separable dual with BAP.
\end{proof}

We refer to \cite{kalton1} for more on little Lipschitz spaces and the ``snowflaking" operation.
On the other hand, Corollary~\ref{cor:ap} provides a negative result. It should be noted that the
existence of finite nested metric spaces with no ``good" extension operator for Lipschitz functions
is known (see Lemma 10.5 in \cite{bb}). This is obtained below from Corollary 5 by abstract nonsense.
Conversely, it would be interesting to exhibit spaces failing (AP) from combinatorial considerations
on finite metric spaces.

\begin{prop}\label{prop:ce}
For any $\lambda\geq 1$, there exist a finite metric space $H_{\lambda}$ and a subset
$G_{\lambda}$ of $H_{\lambda}$ such that if $E\colon\Lip(G_{\lambda})\rightarrow\Lip(H_{\lambda})$
is a linear operator such that $RE=\id_{\Lip(G_{\lambda})}$ (where $R$ is the operator
of restriction to $G_{\lambda}$) then $\Vert E\Vert\geq\lambda$.
\end{prop}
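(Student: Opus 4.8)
The plan is to argue by contradiction and to reduce the statement to Corollary~\ref{cor:ap} through the extension-operator criterion for BAP recalled in the proof of Proposition~\ref{prop:rho}, namely \cite{bm}, Theorem~4. Suppose the conclusion of Proposition~\ref{prop:ce} fails for some $\lambda\geq1$; then every finite metric space $H$ and every subset $G\subseteq H$ admit a linear extension operator $E\colon\Lip(G)\to\Lip(H)$ with $RE=\id_{\Lip(G)}$ and $\|E\|\leq\lambda$. I would fix once and for all a compact metric space $K$ with $\fF(K)$ failing AP, as furnished by Corollary~\ref{cor:ap}; since AP is implied by BAP, $\fF(K)$ fails BAP. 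I then choose an increasing sequence $(M_n)_n$ of finite subsets of $K$, each containing the base point $0$, whose union is dense in $K$, and I normalise all Lipschitz functions to vanish at $0$.

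For fixed $n$ and each $m\geq n$, the uniform hypothesis provides an extension operator $E^m_n\colon\Lip(M_n)\to\Lip(M_m)$ of norm $\leq\lambda$ with $(E^m_n h)|_{M_n}=h$. The next step is a limiting argument to replace the finite targets $M_m$ by $K$ itself. Since $\Lip(M_n)$ is finite-dimensional, I would fix a basis $h_1,\dots,h_d$; for each $m$ the functions $E^m_n h_j$ are $\lambda\|h_j\|_L$-Lipschitz and vanish at $0$, hence (as $K$ has finite diameter) are uniformly bounded on every $M_p$. A diagonal extraction over $p=n,n+1,\dots$ yields a subsequence of indices $m$ along which $E^m_n h_j$ converges pointwise on the dense set $\bigcup_p M_p$ for all $j$ simultaneously; the pointwise limits are $\lambda\|h_j\|_L$-Lipschitz, so they extend uniquely and continuously to $K$. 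Defining $E_n$ on the basis by these limits and extending linearly produces $E_n\colon\Lip(M_n)\to\Lip(K)$ with $\|E_n\|\leq\lambda$, and passing to the limit in $(E^m_n h)|_{M_n}=h$ gives $R_nE_n=\id$, where $R_n$ is restriction to $M_n$. Note that each $E_n$ is built independently, so no global diagonalisation across $n$ is needed.

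It remains to verify the convergence hypothesis of the criterion. Given $f\in\Lip(K)$, write $f_n=E_nR_nf$, a $\lambda\|f\|_L$-Lipschitz function agreeing with $f$ on $M_n$. For $x\in K$ pick $x_n\in M_n$ with $x_n\to x$; then $f_n(x_n)=f(x_n)\to f(x)$ while $|f_n(x)-f_n(x_n)|\leq\lambda\|f\|_L\,d(x,x_n)\to0$, so $f_n(x)\to f(x)$. Thus $(E_n)_n$ is a uniformly bounded sequence of extension operators with $E_nR_nf\to f$ pointwise for every $f$, and \cite{bm}, Theorem~4 forces $\fF(K)$ to have BAP, contradicting the choice of $K$. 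I expect the main obstacle to be precisely this passage from the finite targets $M_m$ to the infinite target $K$: one must preserve both linearity and the identity $R_nE_n=\id$ while taking pointwise limits, which is where the finite-dimensionality of $\Lip(M_n)$ and the uniform Lipschitz bound are essential.
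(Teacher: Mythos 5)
Your proposal is correct and follows essentially the same route as the paper: assume uniform extension operators exist for all finite pairs, use a diagonal/compactness argument to produce operators $E_n\colon\Lip(G_n)\to\Lip(K)$ of norm at most $\lambda$, and deduce that $\fF(K)$ has BAP, contradicting Corollary~\ref{cor:ap}. The only cosmetic difference is in the last step, where the paper passes to the adjoint projections $\fF(K)\to\fF(G_n)$ while you invoke the equivalent criterion of \cite{bm} already quoted in the proof of Proposition~\ref{prop:rho}; your write-up also usefully fleshes out the diagonal extraction that the paper leaves implicit.
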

\begin{proof} Let $K$ be a compact metric space such that $\fF(K)$ fails AP and
let $(G_n)_n$ be an increasing sequence of finite subsets of $K$ whose union is dense in $K$.
Assume that Proposition~\ref{prop:ce} fails for some $\lambda_0\in{\IR}$, and thus that
extension operators with norm bounded by $\lambda_0$ exist for all pairs $(G,H)$ of finite
metric spaces with $G\subset H$. For any given $n$, we can apply this to $(G_n, G_k)$
with $k\geq n$ and use a diagonal argument to get an operator
$E_n\colon\Lip(G_n)\rightarrow\Lip(K)$ with $R_nE_n=\id_{\Lip(G_n)}$ (where $R_n$ is
the operator of restriction to $G_n$) and $\Vert E_n\Vert\leq\lambda_0$.
The operator $E_n$ is conjugate to a projection from $\fF(K)$ onto $\fF(G_n)$,
and it follows that $\fF(K)$ has $\lambda_0$-BAP (with a sequence of projections),
contradicting our assumption on $K$.
\end{proof}

Our work leads to a number of natural questions. We conclude this note by stating some of them.
The first one is due to N. J. Kalton (see \cite{kalton3}, Problem~1):

\begin{que}\label{que:1}
Let $M$ be an arbitrary uniformly discrete metric space, that is, there exists $\theta>0$
such that $d(x,y)\geq\theta$ for all $x\not=y$ in $M$.
Does $\fF(M)$ have the BAP? Note that AP holds by (\cite{kalton1}, Proposition~4.4).
Proposition~\ref{prop:ce} shows that a simple step-by-step approach could not suffice.
A positive answer to Question~\ref{que:1} would imply that every separable Banach space $X$
is approximable, that is, the identity $\id_X$ is pointwise limit of an equi-uniformly
continuous sequence of maps with relatively compact range. Note that by (\cite{kalton3}, Theorem~4.6)
it is indeed so for $X$ and $X^*$ when $X^*$ is separable. On the other hand, a negative answer to Question~\ref{que:1} would
provide an equivalent norm on $\ell_1$ failing MAP and this would solve two classical
problems in approximation theory (\cite{casazza}, Problems~3.12 and 3.8).
\end{que}
\begin{que}
Is there a countable compact space $K$ such that $\fF(K)$ fails (AP)?
\end{que}

\begin{que}
Let $X$ be a separable Banach space.
Does there exist a compact convex subset $K$ of $X$ containing $0$ such that
$\overline{\spann}[K]=X$ and moreover $K$ is a Lipschitz retract of $X$?
Note that when it is so, $X$ has BAP if and only if $\fF(K)$ has BAP.
The answer to this question is positive when $X$ has an unconditional basis:
indeed if all the coordinates of $x\in X$ are strictly positive,
the order interval $[-x, x]=K$ works since truncation by $x$ shows that $K$ is a Lipschitz retract.
\end{que}
\begin{que}
According to \cite{gk}, Definition~5.2., a separable Banach space $X$ has
the $\lambda$-Lipschitz BAP if $\id_X$ is the pointwise limit of a sequence $F_n$
of $\lambda$-Lipschitz maps {\sl with finite-dimensional range}, and this property
is shown in \cite{gk} to be equivalent with the usual $\lambda$-BAP.
Is it possible to dispense with the assumption that the $F_n$'s have finite-dimensional
range and still reach the conclusion? Corollary~\ref{cor:ap} suggests that this improvement
should not be straightforward.
\end{que}

\subsection*{Acknowledgments}
This work was initiated during the Concentration Week on "Non-Linear Geometry
of Banach Spaces, Geometric Group Theory, and Differentiability" organized
in College Station (Texas) in August 2011.
We are grateful to W.B.~Johnson, F.~Baudier, P.~Nowak and B.~Sari
for the perfect organization and stimulating atmosphere of this meeting.
We are also grateful to G.~Lancien for useful conversations.
The second author was partially supported by JSPS and the Sumitomo Foundation.


\begin{thebibliography}{HWW}
%
\bibitem[An]{ando} T. Ando:
{\sl A theorem on nonempty intersection of convex sets and its application}.
J. Approximation Theory \textbf{13} (1975), 158--166.
%
\bibitem[Ar]{arveson} W. Arveson:
{\sl Notes on extensions of C*-algebras}.
Duke Math. J. \textbf{44} (1977), 329--355.
%
\bibitem[BL]{bl} Y. Benyamini, J. Lindenstrauss:
{\sl Geometric nonlinear functional analysis}. Vol. 1.
American Mathematical Society Colloquium Publications 48. American Mathematical Society, 2000. xii+488 pp.
%
\bibitem[BB]{bb} A. Brudnyi, Y. Brudnyi:
{\sl Metric spaces with linear extensions preserving Lipschitz condition},
Amer. J. Math. \textbf{129, 1}(2007), 217-314.

%
\bibitem[Bo]{bm} L. Borel-Mathurin:
{\sl Approximation properties and non-linear geometry of Banach spaces}.
Houston J. of Math., to appear.
%
\bibitem[Ca]{casazza} P. Casazza:
{\sl Approximation properties}, Chapter 7, pp. 271--316
in {\sl Handbook of the geometry of Banach spaces, Volume 1},
W. B. Johnson and J. Lindenstrauss, editors, Elsevier, Amsterdam 2001.
%
\bibitem[DL]{dl} Y. Dutrieux, G. Lancien:
{\sl Isometric embeddings of compact spaces into Banach spaces}.
J. Funct. Anal. \textbf{255} (2008), 494--501.
%
\bibitem[Go]{godard} A. Godard:
{\sl Tree metrics and the Lipschitz-free spaces}.
Proc. Amer. Math. Soc. \textbf{138} (2010), 4311--4320.
%
\bibitem[GK]{gk} G. Godefroy, N. J. Kalton:
{\sl Lipschitz-free Banach spaces}.
Studia Math. \textbf{159} (2003), 121-141.
%
\bibitem[HWW]{hww} P. Harmand, D. Werner, W. Werner:
{\sl M-ideals in Banach spaces and Banach algebras},
Lecture notes in Math. 1547, Springer-Verlag 1993.
%
\bibitem[Ka1]{kalton1} N. J. Kalton:
{\sl Spaces of Lipschitz and H\"older functions and their applications}.
Collect. Math. \textbf{55} (2004), 171--217.
%
\bibitem[Ka2]{kalton2} N. J. Kalton:
{\sl The nonlinear geometry of Banach spaces}.
Rev. Mat. Complut. \textbf{21} (2008), 7--60.
%
\bibitem[Ka3]{kalton3} N. J. Kalton:
{\sl The uniform structure of Banach spaces}.
Math. Ann. To appear.
%
\bibitem[LP]{lp} G. Lancien, E. Pernecka:
{\sl Approximation properties and Schauder decompositions in Lipschitz-free spaces},
to appear.
%
\bibitem[Mat]{mat} J. Matousek:
{\sl Extension of Lipschitz mappings on metric trees},
Comment. Math. Univ. Carolinae \textbf{31, 1} (1990), 99-104.
%
\bibitem[We]{weaver} N. Weaver:
{\sl Lipschitz Algebras}. World Scientific Publishing Co., Inc., River Edge, NJ, 1999. xiv+223 pp.
%
\end{thebibliography}
\end{document}